\newcommand{\excise}[1]{}%{$\star$\textsc{#1}$\star$}
\newtheorem{thm}{Theorem}[section]
\newtheorem{cor}[thm]{Corollary}
\newtheorem{prop}[thm]{Proposition}
\theoremstyle{definition}
\newtheorem{example}[thm]{Example}
\newtheorem{defn}[thm]{Definition}
\numberwithin{equation}{section}
\renewcommand\>{\rangle}
\newcommand\<{\langle}
\newcommand\QQ{\mathbb{Q}}
\newcommand\RR{\mathbb{R}}
\newcommand\ZZ{\mathbb{Z}}
\newcommand\ww{{\mathbf w}}
\newcommand\set[1]{\{#1\}}
\newcommand{\es}{\emptyset}
\DeclareMathOperator\Betti{Betti} % Betti
\DeclareMathOperator\lcm{lcm} % lcm
\DeclareMathOperator\supp{supp} % support
\DeclareMathOperator\Int{Int} %Interval
\DeclareMathOperator\Conv{Conv} %Convex Hull
\DeclareMathOperator\sign{sign} %sign of a vector
\DeclareMathOperator\SIGN{SIGN} %sign of a set of vectors (sign of each)
\DeclareMathOperator\Val{Val} %value vectors
\renewcommand{\tt}{\mathbf{t}}
\newcommand{\zz}{\mathbf{z}}
\newcommand{\gen}[1]{\langle{#1}\rangle}
\newcommand{\sm}{\setminus}
\renewcommand{\epsilon}{\varepsilon}
\begin{document}%%%%%%%%%%%%%%%%%%%%%%%%%%%%%%%%%%%%%%%%%%%%%%%%%%%%%%%%
%%%%%%%%%%%%%%%%%%%%%%%%%%%%%%%%%%%%%%%%%%%%%%%%%%%%%%%%%%%%%%%%%%%%%%%%

\mbox{}
%\vspace{-2ex}%-1.1743pt}
\title[Counting edges in factorization graphs]{Counting edges in factorization graphs of numerical semigroup elements}

\author[Moschetti]{Mariah Moschetti}
\address{Mathematics Department\\San Diego State University\\San Diego, CA 92182}
\email{mmoschetti@sdsu.edu}

\author[O'Neill]{Christopher O'Neill}
\address{Mathematics Department\\San Diego State University\\San Diego, CA 92182}
\email{cdoneill@sdsu.edu}

\date{\today}

\begin{abstract}
A numerical semigroup $S$ is an additively-closed set of non-negative integers, and a factorization of an element $n$ of $S$ is an expression of $n$ as a sum of generators of $S$.  It is known that for a given numerical semigroup $S$, the number of factorizations of $n$ coincides with a quasipolynomial (that is, a polynomial whose coefficients are periodic functions of $n$).  One of the standard methods for computing certain semigroup-theoretic invariants involves assembling a graph or simplicial complex derived from the factorizations of $n$.  In this paper, we prove that for two such graphs (which we call the factorization support graph and the trade graph), the number of edges coincides with a quasipolynomial function of $n$, and identify the degree, period, and leading coefficient of each.  In the process, we uncover a surprising geometric connection:\ a combinatorially-assembled cubical complex that is homeomorphic to real projective space.  
\end{abstract}

\maketitle

% \setcounter{tocdepth}{1}
% \tableofcontents

%%%%%%%%%%%%%%%%%%%%%%%%%%%%%%%%%%%%%%%%%%%%%%%%%%%%%%%%%%%%%%%%%%%%%%%%%
\section{Introduction}%%%%%%%%%%%%%%%%%%%%%%%%%%%%%%%%%%%%%%%%%%%%%%%%%%%
\label{sec:intro}%%%%%%%%%%%%%%%%%%%%%%%%%%%%%%%%%%%%%%%%%%%%%%%%%%%%%%%%
%raggedbottom%%%%%%%%%%%%%%%%%%%%%%%%%%%%%%%%%%%%%%%%%%%%%%%%%%%%%%%%%%%%

Numerical semigroups (additive subsemigroups of the non-negative integers) are algebraic objects that arise naturally in additive combinatorics.  
A classic example of a numerical semigroup is the McNugget semigroup 
$$S = \{6a + 9b + 20c : a, b, c \in \ZZ_{\ge 0}\},$$
named so because McDonald's originally sold chicken McNuggets in packs of size 6, 9, and~20 in the United States~\cite{mcnuggetmag}, so an integer $n$ lies in $S$ when one can buy exactly $n$ McNuggets using these pack sizes.  More generally, we write
$$S = \gen{n_1, \ldots, n_k} = \{z_1n_1 + \cdots + z_kn_k : z_i \in \ZZ_{\geq 0}\}$$
for the smallest numerical semigroup containing  $n_1, \ldots, n_k$, which are referred to as the \emph{generators} of $S$.  

Numerical semigroups are of great interest in factorization theory due in part to the highly non-unique nature of their elements' factorizations~\cite{numericalsurvey}.  Given a numerical semigroup 
$S = \gen{n_1, \ldots, n_k}$
and an element $n \in S$, a \emph{factorization} of $n$ is an expression
$$n = z_1 n_1 + \cdots + z_k n_k$$
of $n$ as a sum of the generators $n_1, \ldots, n_k$ of $S$, and the \emph{set of factorizations} of $n$ is
$$\mathsf{Z}_S(n) = \set{(z_1, \ldots, z_k) \in \ZZ_{\geq 0} : n = z_1n_1 + \cdots + z_kn_k},$$
wherein each factorization is represented by a $k$-tuple.  
In the McNugget semigroup, factorizations of $n$ correspond to the different ways one can purchase $n$ McNuggets, e.g., one can purchase 18 McNuggets with either two packs of $9$ or three packs of $6$, so 
$$\mathsf{Z}_S(18)=\set{(3,0,0), (0,2,0)}.$$

It is natural to wonder, for a given numerical semigroup $S$, what can be said about $|\mathsf Z_S(n)|$, viewed as a function of $n$?  It turns out, $|\mathsf Z_S(n)|$ is a degree $k-1$ quasipolynomial function of $n$; that is,  
assuming $\gcd(n_1, \ldots, n_k)=1$, there exists periodic functions $a_0, \ldots, a_{k-2}: \ZZ_{\geq} \rightarrow \QQ$, each with period dividing $\lcm(n_1, \ldots, n_k)$, such that 
$$
|\mathsf{Z}_{S}(n)|=\frac{1}{(k-1)!n_1\cdots n_k}n^{k-1}+a_{k-2}(n)n^{k-2}+\cdots+a_1(n)n+a_0(n)
$$
for all $n \geq 0$.  This result can be viewed as a consequence of Ehrhart's theorem~\cite{continuousdiscretely,ehrhart}, and is proved in \cite{factorhilbert} using machinery from combinatorial commutative algebra.

One of the standard methods for computing certain semigroup-theoretic invariants (e.g., Betti numbers~\cite{numericalappl} or the catenary degree~\cite{catenarytamefingen}) involves assembling a graph or simplicial complex derived from the factorizations of $n$~\cite{compoverview,affineinvariantcomp}.  Two such graphs that are often used, which we refer to here as the factorization support graph~\cite{numericalappl} and the trade graph~\cite{semitheor} (Definitions~\ref{d:min_trade_graph} and~\ref{d:factgraph}, respectively), both have vertex set $\mathsf{Z}_S(n)$, but with different rules for when two factorizations of $n$ are connected by an edge.  

In this work, we prove that for each graph, the number of edges, viewed as a function of~$n$, coincides with a quasipolynomial for sufficiently large $n$, and identify the degree, period, and leading coefficient (Theorems~\ref{thm:nonedgesF} and~\ref{thm:min_prez_edges}, respectively).  For the factorization support graph, this quasipolynomial has substantially higher degree, meaning for large $n$ it is less efficient for invariant computation than the trade graph.  Additionally, the trade graph is defined in terms of a chosen toric basis~\cite{alggeodiscopt,grobpoly}, for which there are many standard choices that vary in size~\cite{toricbasesrelativesize}.  For instance, the Markov basis~\cite{markovbook,algmarkov} has minimal cardinality, while the universal Gr\"obner basis and the Graver basis~\cite{gravercomplexitymatrices,groebnercomplexitymatrices,groebnerminimaldegree} are notoriously larger.  By Theorem~\ref{thm:min_prez_edges}, regardless of which toric basis is chosen, the number of edges in the trade graph is a quasipolynomial of the same degree, and only the leading coefficient depends on the number of trades.  

We note briefly that there is a third graph appearing frequently in the literature, known as the squarefree divisor graph~\cite{squarefreedivisorcomplex,rosalesrelationalg,squarefreedivisororigin}, which has one vertex for each generator of~$S$ and thus is generally much smaller the trade graph or factorization support graph.  This graph can be used to compute minimal presentations and Betti numbers of $S$, but is insufficient for other invariants like the catenary degree.  Regardless, we do not examine the squarefree divisor graph in this paper as it is known to coincide with the complete graph for all but finitely many $n \in S$.   

In the process of proving Theorem~\ref{thm:nonedgesF}, we uncover a cubical complex that is homeomorphic to real projective space; we explore this connection in 
Section~\ref{sec:cubecomplexes}.  
% the final section of the arXiv version of this paper~\cite{countingedges}.  

%%%%%%%%%%%%%%%%%%%%%%%%%%%%%%%%%%%%%%%%%%%%%%%%%%%%%%%%%%%%%%%%%%%%%%%%%
\section{Background and notation}%%%%%%%%%%%%%%%%%%%%%%%%%%%%%%%%%%%%%%%%
\label{sec:background}%%%%%%%%%%%%%%%%%%%%%%%%%%%%%%%%%%%%%%%%%%%%%%%%%%%
%raggedbottom%%%%%%%%%%%%%%%%%%%%%%%%%%%%%%%%%%%%%%%%%%%%%%%%%%%%%%%%%%%%

Given $k \in \ZZ_{>0}$, we write $[k] = \set{1, \ldots, k}$. 
For ease of notation, and when no confusion can arise, we sometimes refer to a subset of $[k]$ by its elements, e.g., writing $\{13, 245\}$ for $\{\{1,3\}, \{2,4,5\}\}$.  

Our notation and terminology for graphs and posets follows that of~\cite{walkthroughcomb}; we refer the reader to this monograph for any missing definitions.  Given a finite poset $(P,\preceq)$, we call sets of the form
$$[x,y) = \{z \in P : x \preceq z \preceq y, \, z \ne y\},$$
\emph{intervals} of $P$, and $\Int(P)$ denotes the set of intervals of $P$.  
We briefly recall here the \emph{M\"{o}bius function} $\mu:\Int(P) \rightarrow \RR$, which has $\mu(x,x)=1$ and 
$$\mu(x,y)=-\!\!\sum_{z\in[x,y)}\mu(x,z)$$
for all $x \prec y$, as well as the \emph{Dual M\"{o}bius Inversion Formula}, stated below.  

\begin{thm}[{\cite[Corollary~16.22]{walkthroughcomb}}]\label{thm:MIF}
Let $P$ be a finite poset, and let $f: P \rightarrow \RR$ be a function. Let the function $g:P\rightarrow \RR$ be defined by 
$$g(y)=\sum_{x \succeq y}f(x).$$
Then
$$f(y)=\sum_{x \succeq y}g(x)\mu(y,x).$$
\end{thm}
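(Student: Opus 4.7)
The plan is to prove the identity by direct substitution: plug the definition of $g$ into the right-hand side of the claimed formula, swap the order of summation, and reduce the resulting inner sum using the defining recursion of $\mu$. This is the standard tactic for Möbius-inversion style identities, and the only real content is verifying an inner sum collapses to a Kronecker delta.

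More concretely, starting from the right-hand side and expanding $g(x) = \sum_{z \succeq x} f(z)$, I would write
$$\sum_{x \succeq y} g(x)\,\mu(y,x) = \sum_{x \succeq y} \mu(y,x) \sum_{z \succeq x} f(z) = \sum_{z \succeq y} f(z) \sum_{x \in [y,z]} \mu(y,x),$$
where the last equality is legitimate because the set of pairs $(x,z)$ with $y \preceq x \preceq z$ is finite and the regrouping is just a reindexing. Here $[y,z]$ denotes the closed interval $\{x \in P : y \preceq x \preceq z\}$. The goal then reduces to showing that the inner sum $\sum_{x \in [y,z]} \mu(y,x)$ equals $1$ when $z = y$ and $0$ otherwise, because in that case only the $z = y$ summand survives and the identity becomes $f(y) = f(y)$.

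The Kronecker delta evaluation of this inner sum is immediate from the definition of $\mu$ given in the excerpt: when $z = y$ the sum is $\mu(y,y) = 1$; when $y \prec z$, the recursion $\mu(y,z) = -\sum_{x \in [y,z)} \mu(y,x)$ rearranges to $\sum_{x \in [y,z]} \mu(y,x) = 0$. Substituting these back leaves exactly $f(y)$ on the right-hand side.

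The main potential obstacle is the order swap, since $P$ is only assumed to be a finite poset, not a lattice or chain, so one must be careful that the index set $\{(x,z) : y \preceq x \preceq z\}$ is handled symmetrically. Finiteness of $P$ makes this a routine bookkeeping step rather than a genuine difficulty, so I expect the whole argument to occupy just a few lines.
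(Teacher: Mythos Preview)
Your argument is correct and is exactly the standard proof of dual M\"obius inversion. Note, however, that the paper does not supply its own proof of this statement: Theorem~\ref{thm:MIF} is simply quoted from \cite[Corollary~16.22]{walkthroughcomb} as a background result, so there is no in-paper proof to compare against. Your substitution-and-swap approach, together with the observation that $\sum_{x\in[y,z]}\mu(y,x)$ collapses to a Kronecker delta directly from the defining recursion of $\mu$, is precisely the textbook argument and is complete as written.
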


In the remainder of this section, we establish the necessary background and notation on numerical semigroups. For a more thorough introduction, see \cite{numericalappl}. 

The \emph{numerical semigroup} generated by $n_1, \ldots, n_k \in \ZZ_{\ge 1}$ is the additive semigroup
$$S = \gen{n_1, \ldots, n_k} = \set{a_1n_1 + \cdots + a_kn_k : a_1, \ldots, a_k \in \ZZ_{\ge 0}}.$$
Note that we do not require $n_1, \ldots, n_k$ to be relatively prime, nor, equivalently, that $\ZZ_{\ge 0} \setminus S$ be finite.  If no proper subset of $n_1, \ldots, n_k$ generates the same subset of $\ZZ_{\ge 0}$, then we say $n_1, \ldots, n_k$ is the \emph{minimal generating set} of $S$.
Every numerical semigroup has a unique minimal generating set, and unless otherwise specified, whenever we write $S=\gen{n_1, \ldots ,n_k}$, we assume $n_1, \ldots, n_k$ is the minimal generating set of $S$. The elements $n_1, \ldots, n_k$ are then referred to as the \emph{atoms} of $S$, and $\mathsf e(S) = k$ as the \emph{embedding dimension} of $S$.  
A \emph{factorization} of $n \in S$ is an expression of the form 
$$n = z_1n_1 + \cdots + z_kn_k$$
for some $\mathbf z = (z_1, \ldots, z_k) \in \ZZ_{\ge 0}^k$, and the \emph{set of factorizations} of $n$ is 
$$\mathsf{Z}_{S}(n)=\set{(z_1,\ldots,z_k) : n = z_1n_1 + \cdots +z_kn_k}.$$
We also often refer to elements $\zz \in \mathsf Z_S(n)$ as \emph{factorizations} of $n$.  When there is no confusion which numerical semigroup is being considered, we simply write $\mathsf{Z}(n)$ for $\mathsf{Z}_{S}(n)$.  Given $\zz \in \mathsf Z(n)$, we write $\supp(\zz) = \{i : z_i > 0\}$, and given $I \subseteq [k]$, we write $\mathsf{Z}_I(n)$ for $\mathsf{Z}_T(n)$, where $T = \gen{n_i : i \in I}$. 

We conclude this section with a description of the function $n \mapsto \mathsf |Z_S(n)|$ for fixed~$S$.  
This result was stated as \cite[Theorem~3.9]{factorhilbert} in the special case where the generators of $S$ are relatively prime, though the result itself is folklore.  

A function $f: \ZZ_{\ge 0} \rightarrow \RR$ is a \emph{quasipolynomial} of degree $d$ if there exists periodic functions $a_0,\ldots,a_d: \ZZ_{\geq 0} \rightarrow \RR$, with $a_d$ not identically 0, such that $$f(n) =a_d(n)n^d+ \cdots +a_1(n)n+a_0(n).$$
The \emph{period} of $f$ is the minimal integer $p$ such that $a_i(n+p) =a_i(n)$ for all $ 0 \leq i \leq d$.

\begin{thm}\label{thm:Hilbert_FactIsQuasi}
Fix a numerical semigroup $S = \gen{n_1,\ldots,n_k}$ with $\gcd(n_1, \ldots, n_k)=c$. Then there exist periodic functions $a_0,\ldots,a_{k-2}: \ZZ_{\geq0} \rightarrow \QQ$, each with period dividing $\lcm{(n_1, \ldots, n_k)}$, such that if $n \ge 0$ and $c \mid n$, then 
$$|\mathsf{Z}_{S}(n)|=\frac{c}{(k-1)! \, n_1\cdots n_k}n^{k-1}+a_{k-2}(n)n^{k-2}+\cdots+a_1(n)n+a_0(n),$$
and $|\mathsf{Z}_{S}(n)| = 0$ otherwise.
\end{thm}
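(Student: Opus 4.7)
The plan is to realize $\mathsf Z_S(n)$ as the set of lattice points in a rational polytope and apply Ehrhart's theorem for rational polytopes, after first reducing to the case $\gcd(n_1, \ldots, n_k) = 1$.

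For the reduction, observe that $c$ divides $\sum z_i n_i$ for every $\zz \in \ZZ_{\ge 0}^k$, so $|\mathsf Z_S(n)| = 0$ whenever $c \nmid n$. When $c \mid n$, writing $n = cn'$ and $n_i = cn_i'$ yields an obvious bijection $\mathsf Z_S(n) \to \mathsf Z_{S'}(n')$, where $S' = \gen{n_1', \ldots, n_k'}$ has $\gcd = 1$. Assuming the theorem for $S'$, the quasipolynomial expression for $|\mathsf Z_{S'}(n/c)|$ viewed as a function of $n$ has leading term $\tfrac{1}{(k-1)!\,n_1'\cdots n_k'} (n/c)^{k-1} = \tfrac{c^k}{(k-1)!\,n_1\cdots n_k} \cdot \tfrac{n^{k-1}}{c^{k-1}} = \tfrac{c\, n^{k-1}}{(k-1)!\,n_1\cdots n_k}$, matching the statement. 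Since $\lcm(n_1, \ldots, n_k) = c \cdot \lcm(n_1', \ldots, n_k')$, the period bound also descends under the substitution $n' = n/c$.

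Now assume $c = 1$. Identify $\mathsf Z_S(n)$ with the lattice points in the rational simplex $\Delta_n = \bigl\{\zz \in \RR_{\ge 0}^k : z_1 n_1 + \cdots + z_k n_k = n\bigr\}$, whose vertices are $(n/n_i)\,e_i$. Rather than normalizing volume inside the hyperplane containing $\Delta_n$, I would apply Ehrhart's theorem to the full-dimensional ``lower'' simplex
$$T_n = \bigl\{\zz \in \RR_{\ge 0}^k : z_1 n_1 + \cdots + z_k n_k \le n\bigr\},$$
which has Euclidean volume $n^k/(k!\,n_1\cdots n_k)$ and whose vertex coordinates $0,\,(n/n_i)\,e_i$ have denominators dividing $\lcm(n_1, \ldots, n_k)$. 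Ehrhart's theorem then yields that $F(n) := |T_n \cap \ZZ^k|$ is a quasipolynomial of degree $k$ with leading coefficient the constant $1/(k!\,n_1\cdots n_k)$ and period dividing $\lcm(n_1, \ldots, n_k)$. Slicing $T_n$ by the hyperplanes $\sum z_i n_i = m$ for $0 \le m \le n$ gives $F(n) = \sum_{m=0}^n |\mathsf Z_S(m)|$, so $|\mathsf Z_S(n)| = F(n) - F(n-1)$, a quasipolynomial of degree $k-1$ whose period still divides $\lcm(n_1, \ldots, n_k)$. Since the leading coefficient of $F$ is a genuine constant (not merely periodic), the leading coefficient of this difference is exactly $k \cdot 1/(k!\,n_1\cdots n_k) = 1/((k-1)!\,n_1\cdots n_k)$, as required.

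The core content of the proof is Ehrhart's theorem, so no step is especially deep. The most error-prone part is the bookkeeping in the gcd reduction: verifying that the factor $c^k$ arising from $n_1\cdots n_k = c^k\, n_1'\cdots n_k'$ cancels against the $c^{k-1}$ from $(n/c)^{k-1}$ to leave exactly one factor of $c$ in the leading coefficient, and confirming that dilation by $c$ in the variable $n$ multiplies the period by exactly $c$, so that the bound $\lcm(n_1, \ldots, n_k)$ is preserved on the nose.
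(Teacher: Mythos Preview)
Your gcd reduction matches the paper's exactly; the paper then simply cites \cite[Theorem~3.9]{factorhilbert} for the coprime case rather than reproving it via Ehrhart.  Your Ehrhart argument for that case is natural but has a gap in the last step.  If $F(n)=\alpha n^{k}+b(n)\,n^{k-1}+\cdots$ with $\alpha$ constant and $b$ periodic, then
\[
F(n)-F(n-1)=\bigl(k\alpha+b(n)-b(n-1)\bigr)\,n^{k-1}+\cdots,
\]
so constancy of the \emph{top} coefficient of $F$ does not force the top coefficient of the difference to equal $k\alpha$; you also need $b$ constant.  For a general rational polytope the second Ehrhart coefficient can genuinely oscillate (e.g.\ the triangle with vertices $(0,0)$, $(\tfrac12,0)$, $(0,\tfrac12)$ has linear Ehrhart coefficient $\tfrac34$ for even $n$ and $\tfrac12$ for odd $n$), so this requires an argument specific to your $T_1$.

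The fix is short.  One option is to do what you deliberately avoided and apply Ehrhart directly to the $(k-1)$-simplex $\Delta_1$: its leading Ehrhart coefficient is the relative volume, hence a constant $c_0$, and then summing $|\mathsf{Z}_S(m)|$ over $0\le m\le n$ shows the leading coefficient of $F$ equals $c_0/k$, giving $c_0=k\alpha$ as you wanted.  Alternatively, use the generating function $\sum_{n\ge 0}|\mathsf{Z}_S(n)|\,t^{n}=\prod_i(1-t^{n_i})^{-1}$: since $\gcd(n_1,\ldots,n_k)=1$, the pole at $t=1$ has order $k$ while every other root of unity is a pole of order at most $k-1$, so only $t=1$ contributes to the $n^{k-1}$ term and the leading coefficient is constant.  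Either route then validates the value $1/\bigl((k-1)!\,n_1\cdots n_k\bigr)$ you computed.
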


\begin{proof}
Let $S' = \gen{\tfrac{n_1}{c}, \ldots, \tfrac{n_k}{c}}$. Given any $n \in S$, we have $n/c \in S'$. Moreover, given any $n' \in S'$ such that $n'=n/c$, we have $n \in S$. Thus, it is easy to see $\mathsf{Z}_{S}(n) = \mathsf{Z}_{S'}(n/c)$. 

The generators of $S'$ are coprime by definition. By \cite[Theorem~3.9]{factorhilbert}, $|\mathsf Z_S(n)|$ coincides with a degree $k-1$ quasipolynomial with period dividing $\lcm{(\tfrac{n_1}{c}, \ldots, \tfrac{n_k}{c})}$ for all $n \geq 0$, so 
$$
|\mathsf{Z}_{S}(n)| = 
\begin{cases}
|\mathsf{Z}_{S'}(\tfrac{n}{c})| & c \mid n; \\
0 & c \nmid n,
\end{cases}
$$
coincides with a degree $k-1$ quasipolynomial with period dividing $\lcm{(n_1, \ldots, n_k)}$ for all $n \geq 0$.  Moreover, restricting the domain of $|\mathsf{Z}_{S}(n)|$ to $n \in c\ZZ_{\ge 0}$, by \cite[Theorem~3.9]{factorhilbert} the leading coefficient is
\begin{align*}
\displaystyle\frac{1}{(k-1)! \, \tfrac{n_1}{c}\cdots \tfrac{n_k}{c}}(n/c)^{k-1} &= 
% \displaystyle\frac{c^k}{(k-1)!n_1\cdots n_k}\frac{n^{k-1}}{c^{k-1}} \\ &=
\displaystyle\frac{c}{(k-1)! \, n_1\cdots n_k}n^{k-1}.
\end{align*}
This gives the claimed leading coefficient whenever $c \mid n$, as $|\mathsf{Z}_{S}(n)| = 0$ otherwise.
\end{proof}

%%%%%%%%%%%%%%%%%%%%%%%%%%%%%%%%%%%%%%%%%%%%%%%%%%%%%%%%%%%%%%%%%%%%%%%%%
\section{Counting edges in factorization graphs}%%%%%%%%%%%%%%%%%%%%%%%%%
\label{sec:countingedges}%%%%%%%%%%%%%%%%%%%%%%%%%%%%%%%%%%%%%%%%%%%%%%%%
%raggedbottom%%%%%%%%%%%%%%%%%%%%%%%%%%%%%%%%%%%%%%%%%%%%%%%%%%%%%%%%%%%%

In this section, we examine two graphs associated to each element $n \in S$, both with vertex set $\mathsf Z_S(n)$.  We prove the number of edges in each is a quasipolynomial function of $n$, and identify its degree, period, and leading coefficient.

\subsection{Factorization support graphs}

The factorization support graph of an element $n \in S$ is a graph that relates the factorizations of $n$ by which generators of $S$ they share~\cite{numericalappl}.  
For an example, see Figure \ref{fig:FSG_example1}. 

\begin{defn}\label{d:factgraph}
The \emph{factorization support graph of $n$} is the simple graph $\nabla_S(n)$ with vertex set $\mathsf{Z}_{S}(n)$ and an edge between $\zz$ and $\zz'$ whenever $\supp(\zz) \cap \supp(\zz') \neq \es$. 
\end{defn}

Let us briefly examine the case $k = 3$.  

\begin{figure}[t]
\centering
\begin{tikzpicture}
\node[shape=circle,draw=black,label={above:$(4,0,1)$}] (A) at (0,0) {};
\node[shape=circle,draw=black, label={above:$(1,0,3)$}] (B) at (2,0){};
\node[shape=circle,draw=black, label={above:$(0,4,0)$}] (C) at (4,0){};
\path [-] (A) edge node[right] {} (B);
\end{tikzpicture}
\caption{The factorization support graph $\nabla_S(44)$ for $S=\gen{8,11,12}$.}
\label{fig:FSG_example1}
\end{figure}
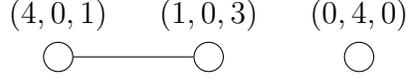

\begin{example}\label{ex:k=3nonedgecount-FSG}
For $S= \gen{n_1, n_2, n_3}$ and positive $n \in S$, two factorizations $\zz, \zz' \in \mathsf Z_S(n)$ lack an edge in $\nabla_S(n)$ whenever $\supp(\zz) \cap \supp(\zz') = \emptyset$.  As such, neither $\zz$ nor $\zz'$ can have full support, so $\{\supp(\zz), \supp(\zz')\}$ must equal 
$$
\set{12,3},
\quad
\set{13, 2},
\quad
\set{23, 1},
\quad
\set{1, 2},
\quad
\set{1,3},
\quad
\text{or}
\quad \set{2,3}.
$$
Note that if $\{q,r,s\} = [3]$, then $\zz \in \mathsf{Z}_{q}(n)$ implies $\zz \in \mathsf{Z}_{qr}(n)$ and $\zz \in \mathsf{Z}_{qs}(n)$.  A~quick inclusion-exclusion argument gives us exactly
\begin{align*}
\binom{|\mathsf{Z}_{S}(n)|}{2}
&- |\mathsf{Z}_{12}(n)||\mathsf{Z}_{3}(n)| - |\mathsf{Z}_{13}(n)||\mathsf{Z}_{2}(n)| - |\mathsf{Z}_{23}(n)||\mathsf{Z}_{1}(n)|
\\
&+ |\mathsf{Z}_{1}(n)||\mathsf{Z}_{2}(n)| + |\mathsf{Z}_{1}(n)||\mathsf{Z}_{3}(n)| + |\mathsf{Z}_{2}(n)||\mathsf{Z}_{3}(n)|
\end{align*}
total edges in $\nabla_S(n)$.
\end{example}

The remainder of this subsection is devoted to obtaining a formula (Theorem~\ref{thm:nonedgesF}) for the number of edges of $\nabla_S(n)$ using an inclusion-exclusion argument in the spirit of Example~\ref{ex:k=3nonedgecount-FSG}.  
To do this precisely, we construct the following poset and apply the Dual M\"{o}bius Inversion Formula (Theorem~\ref{thm:MIF}).

\begin{defn}\label{d:posetds}
The \emph{poset of disjoint supports on $k$ elements} is the partially ordered set $(\mathcal{P}_k, \preceq)$ whose ground set is the unordered pairs $\set{I,J}$ such that $I, J \subseteq [k]$ are nonempty, proper disjoint subsets, together with a unique minimum element $\hat 0$, such that $\set{R,T} \preceq \set{I, J}$ whenever, up to relabeling, $R \supseteq I$ and $T \supseteq J$.
\end{defn}

\begin{example}
Notice that $\mathcal{P}_3$, depicted in Figure~\ref{fig:posetds_k=3}, encapsulates the structure of the inclusion-exclusion argument in Example~\ref{ex:k=3nonedgecount-FSG}.  
In particular, for each $\set{I,J} \in \mathcal{P}_3$, the value $\mu(\hat0,\set{I,J})$ (depicted in red next to $\{I,J\}$ in Figure~\ref{fig:posetds_k=3}) equals the coefficient of the term $|\mathsf{Z}_I(n)||\mathsf{Z}_J(n)|$ in the expression for the number of edges in $\nabla_S(n)$.  
\end{example}

\begin{figure}[t]
    \centering
\begin{tikzpicture}

%%%%%%%%CONNECTIONS%%%%%%%

%height 1 covering 0:
% 0 -- (12,3)
\draw[thick] (2,0) -- (4.5,2);

% 0 -- (13,2)
\draw[thick] (2,0) -- (2,2);

% 0 -- (23,1)
\draw[thick] (2,0) -- (-0.5,2);

%height 2 covering 1
% (12,3) -- (1,3)
\draw[thick] (-0.5,2) -- (2,4);

% (12,3) -- (2,3)
\draw[thick] (-0.5,2) -- (-0.5,4);

% (13,2) -- (1,2)
\draw[thick] (2,2) -- (4.5,4);

% (13,2) -- (2,3)
\draw[thick] (2,2) -- (-0.5,4);

% (23,1) -- (1,2)
\draw[thick] (4.5,2) -- (4.5,4);

% (23,1) -- (1,3)
\draw[thick] (4.5,2) -- (2,4);

%%%%%%%%ACTUAL POINTS%%%%%%%
% height 0, minimal element 0
\filldraw (2,0) circle (2pt) node[anchor=north] {$\hat0$} node[anchor=west,  yshift=-1mm, color=red] {$1$};

% height 1
\filldraw (-0.5,2) circle (2pt) node[anchor=north,xshift=-4mm]{$\set{12,3}$} node[anchor=south east, xshift=-1mm, color=red] {$\textrm{-}1$\!};
\filldraw (2,2) circle (2pt) node[anchor=north,xshift=-1.5mm] {$\set{13,2}$} node[anchor=south, yshift=1mm, color=red] {$\textrm{-}1$};
\filldraw (4.5,2) circle (2pt) node[anchor=north,xshift=4mm] {$\set{23,1}$} node[anchor=south west, color=red] {$\textrm{-}1$};

% height 2
\filldraw (4.5,4) circle (2pt) node[anchor=south] {$\set{1,2}$} node[anchor=north west, color=red] {$1$};
\filldraw (2,4) circle (2pt) node[anchor=south] {$\set{1,3}$} node[anchor=north, color=red] {$1$};
\filldraw (-0.5,4) circle (2pt) node[anchor=south] {$\set{2,3}$} node[anchor=north east, color=red] {$1$};
\end{tikzpicture}
    \caption{The poset $\mathcal{P}_3$ and the values of $\mu(\hat0, -)$ in red.}
    \label{fig:posetds_k=3}
\end{figure}
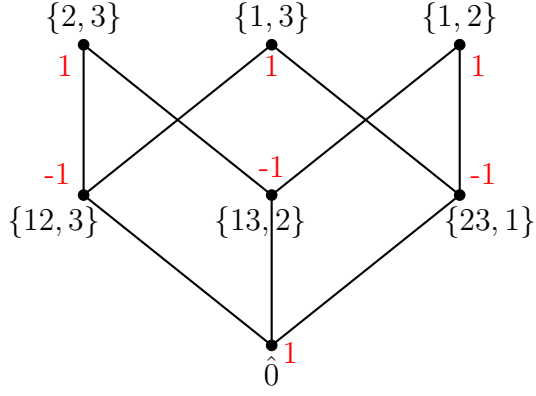

\begin{prop}\label{prop:mobius_PDS}
For any $\set{I,J} \in \mathcal{P}_k$, we have
$$\mu(\hat 0, \set{I,J}) = (-1)^{k-|I|-|J|+1}.$$
\end{prop}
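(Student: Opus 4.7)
The plan is to prove the claimed formula by strong induction on $m = k - |I| - |J|$, the number of elements of $[k]$ lying outside $I \cup J$.

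For the base case $m = 0$, the sets $I$ and $J$ partition $[k]$, so any $\{R, T\} \preceq \{I, J\}$ must satisfy (after relabeling) $R \supseteq I$, $T \supseteq J$, and $R \cap T = \es$, which forces $R = I$ and $T = J$.  Hence $[\hat 0, \{I, J\}] = \{\hat 0, \{I, J\}\}$, and the defining recursion for $\mu$ gives $\mu(\hat 0, \{I, J\}) = -1 = (-1)^{m+1}$, as desired.

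For the inductive step, the main ingredient is a bijection between the elements of the open interval $(\hat 0, \{I, J\}]$ and the functions $f : [k] \setminus (I \cup J) \to \{\mathrm{R}, \mathrm{T}, \mathrm{N}\}$, sending $f$ to the pair $\{R_f, T_f\}$ with $R_f = I \cup f^{-1}(\mathrm{R})$ and $T_f = J \cup f^{-1}(\mathrm{T})$.  No two distinct $f$ yield the same unordered pair, because if both $(R, T)$ and $(T, R)$ witnessed $\{R, T\} \preceq \{I, J\}$, then $R, T \supseteq I \cup J$, contradicting the disjointness of $R$ and $T$ (since $I \cup J \ne \es$).  By the inductive hypothesis, each $f$ not identically $\mathrm{N}$ contributes
$$\mu(\hat 0, \{R_f, T_f\}) = (-1)^{m - |f^{-1}(\mathrm{R})| - |f^{-1}(\mathrm{T})| + 1}.$$

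Substituting into $\mu(\hat 0, \{I, J\}) = -1 - \sum_{f \not\equiv \mathrm{N}} \mu(\hat 0, \{R_f, T_f\})$ and grouping by the triple $(r, t, s) = (|f^{-1}(\mathrm{R})|, |f^{-1}(\mathrm{T})|, |f^{-1}(\mathrm{N})|)$, the trinomial theorem gives
$$\sum_{r + t + s = m} \binom{m}{r, t, s} (-1)^{s + 1} = -(1 + 1 - 1)^m = -1,$$
and isolating the $f \equiv \mathrm{N}$ term (which contributes $(-1)^{m+1}$) then yields $\mu(\hat 0, \{I, J\}) = -1 - (-1 - (-1)^{m+1}) = (-1)^{m+1}$.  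The main obstacle is verifying the bijection, and in particular that the ``up to relabeling'' clause in Definition~\ref{d:posetds} does not create hidden identifications in the interval; but as noted above, the disjointness of $R$ and $T$ makes this straightforward.
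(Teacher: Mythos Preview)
Your proof is correct and follows essentially the same approach as the paper: both argue by induction on $m = k - |I| - |J|$, parametrize the elements of $(\hat 0,\{I,J\})$ by how the $m$ leftover elements of $[k]$ are distributed, and then collapse the sum via a (bi/tri)nomial identity. The only cosmetic differences are that you phrase the parametrization as functions $f:[k]\setminus(I\cup J)\to\{\mathrm R,\mathrm T,\mathrm N\}$ and invoke the trinomial theorem directly, whereas the paper groups by $i = r+t$ and applies the binomial theorem; you also make explicit (and the paper leaves implicit) why the ``up to relabeling'' clause does not cause collisions in the interval.
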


\begin{proof}
Fix $\set{I,J} \in \mathcal{P}_k \setminus \{\hat 0\}$, and let $m = k-|I|-|J|$ (the height of $\set{I,J}$ in $\mathcal P_k$).  If~$m=0$, then $[k] = I \sqcup J$, so $\{I,J\}$ covers $\hat 0$, and thus $\mu(\hat0, \set{I,J}) = -1 = (-1)^{m+1}.$  As such, suppose $m > 0$.  

Suppose $\{R,T\} \in (\hat 0, \{I,J\})$, and let 
$i = |R| + |T| - |I| - |J| \ge 0.$
Since $R \supseteq I$ and $T \supseteq J$, each such element $\set{R,T}$ is uniquely determined by a choice of $i$ elements from $[k] \sm (I \sqcup J)$ and a partition of those $i$ elements into $R \setminus I$ and $T \setminus J$.  There are thus $2^i\binom{m}{i}$ such elements.  Proceeding by induction on $m$, we may assume 
$$\mu(\hat0,\set{R,T})=(-1)^{k-|R|-|T|+1}= (-1)^{k-|I|-|J|-i+1} = (-1)^{m-i+1}$$
whenever $\{R,T\} \prec \{I,J\}$, meaning
\begin{align*}
\mu(\hat0,\set{I,J})
% &= -\!\!\!\!\!\!\sum_{X\in[\hat0, \set{I,J})}\!\!\!\!\!\!\mu(\hat0,X)
&= -\mu(\hat0,\hat0)-\!\!\!\!\!\!\!\!\!\sum_{\set{R,T}\in(\hat0, \set{I,J})}\!\!\!\!\!\!\!\!\!\mu(\hat0,\set{R,T})
= -1 - \sum_{i=1}^{m}(-1)^{m-i+1}{2^i\binom{m}{i}} \\
% &= -\left(1-\sum_{i=1}^{m}(-1)^{m-i}{2^i\binom{m}{i}}\right)\\
&= -1+\sum_{i=1}^{m}(-1)^{m-i}{2^i\binom{m}{i}}
% &=-1+\sum_{i=0}^{m}(-1)^{m-i}{2^i\binom{m}{i}}-(-1)^{m}
% &=-1 + \sum_{i=0}^{m}(-1)^{m-i}{2^i\binom{m}{i}} + (-1)^{m+1}
= -1 +(2-1)^{m} +(-1)^{m+1}
=(-1)^{m+1},
\end{align*}
applying the binomial theorem in the last line.  
\end{proof}

\begin{thm}\label{thm:nonedgesF}
The number of edges in $\nabla_S(n)$ is given by
$$
|E(\nabla_S(n))| = 
\binom{|\mathsf{Z}_{S}(n)|}{2} + \sum_{\set{I,J} \in \mathcal P_k} (-1)^{k-|I|-|J|+1} |\mathsf{Z}_I(n)||\mathsf{Z}_J(n)|.
$$
In particular, $|E(\nabla_S(n))|$ is a quasipolynomial of degree $2k - 2$ with leading coefficient 
$$\frac{c^2}{2\left((k-1)!\right)^2n_1^2 \cdots n_k^2}$$
and period dividing $\lcm(n_1, \ldots, n_k)$, where $c = \gcd(n_1, \ldots, n_k)$.  
\end{thm}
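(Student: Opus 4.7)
The plan is to count the \emph{non-edges} of $\nabla_S(n)$—that is, unordered pairs $\set{\zz, \zz'} \subseteq \mathsf{Z}_S(n)$ with $\supp(\zz) \cap \supp(\zz') = \es$—via an inclusion–exclusion argument on $\mathcal{P}_k$, and then recover $|E(\nabla_S(n))|$ by subtracting the non-edge count from $\binom{|\mathsf{Z}_S(n)|}{2}$.  I assume $n \geq 1$ throughout; for $n \notin S$ the formula is trivially $0 = 0$, and the case $n = 0$ (where $\nabla_S(0)$ is a single vertex) can be checked directly.  Under this assumption every factorization has nonempty support, so each non-edge $\set{\zz, \zz'}$ determines a well-defined element $\set{\supp(\zz), \supp(\zz')}$ of $\mathcal{P}_k \setminus \set{\hat 0}$.

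Define $f : \mathcal{P}_k \to \ZZ$ by $f(\hat 0) = 0$ and by letting $f(\set{I, J})$ be the number of non-edges $\set{\zz, \zz'}$ with $\set{\supp(\zz), \supp(\zz')} = \set{I, J}$, and set $g(\set{I, J}) = \sum_{\set{R, T} \succeq \set{I, J}} f(\set{R, T})$.  The key identity is $g(\set{I, J}) = |\mathsf{Z}_I(n)||\mathsf{Z}_J(n)|$ for $\set{I, J} \neq \hat 0$: a pair $\set{\zz, \zz'}$ contributes to $g(\set{I, J})$ precisely when, after relabeling, $\zz \in \mathsf{Z}_I(n)$ and $\zz' \in \mathsf{Z}_J(n)$, and since $I \cap J = \es$ together with $n \geq 1$ forces $\mathsf{Z}_I(n) \cap \mathsf{Z}_J(n) = \es$, these unordered pairs are in bijection with ordered pairs in $\mathsf{Z}_I(n) \times \mathsf{Z}_J(n)$.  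Applying the Dual M\"obius Inversion Formula (Theorem~\ref{thm:MIF}) at $y = \hat 0$ together with $f(\hat 0) = 0$ yields
\begin{align*}
g(\hat 0) = -\sum_{\set{I, J} \in \mathcal{P}_k \setminus \set{\hat 0}} g(\set{I, J})\, \mu(\hat 0, \set{I, J}),
\end{align*}
in which $g(\hat 0)$ is exactly the total number of non-edges.  Plugging in Proposition~\ref{prop:mobius_PDS} and computing $|E(\nabla_S(n))| = \binom{|\mathsf{Z}_S(n)|}{2} - g(\hat 0)$ yields the claimed closed-form expression.

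For the quasipolynomial statement, Theorem~\ref{thm:Hilbert_FactIsQuasi} applied to each subsemigroup $\gen{n_i : i \in I}$ shows that $|\mathsf{Z}_I(n)|$ is a quasipolynomial of degree $|I| - 1$ whose period divides $\lcm(n_1, \ldots, n_k)$, so every summand in the formula is a quasipolynomial in $n$ with period dividing $\lcm(n_1, \ldots, n_k)$.  The binomial term $\binom{|\mathsf{Z}_S(n)|}{2}$ has degree $2(k-1) = 2k - 2$ and leading coefficient $\tfrac{1}{2}\bigl(\tfrac{c}{(k-1)!\, n_1 \cdots n_k}\bigr)^2 = \tfrac{c^2}{2((k-1)!)^2(n_1 \cdots n_k)^2}$, while each product $|\mathsf{Z}_I(n)||\mathsf{Z}_J(n)|$ with $\set{I, J} \in \mathcal{P}_k \setminus \set{\hat 0}$ has degree $|I| + |J| - 2 \leq k - 2$, because $I, J$ are disjoint subsets of $[k]$.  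For $k \geq 2$ this is strictly less than $2k - 2$, so the binomial term alone determines the degree, leading coefficient, and period.

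The main obstacle is pinning down the identity $g(\set{I, J}) = |\mathsf{Z}_I(n)||\mathsf{Z}_J(n)|$, which hinges on translating the poset condition $\set{R, T} \succeq \set{I, J}$ into ``$\zz \in \mathsf{Z}_I(n)$ and $\zz' \in \mathsf{Z}_J(n)$, up to swap'' and then exploiting the disjointness of $\mathsf{Z}_I(n)$ and $\mathsf{Z}_J(n)$ to rule out double counting.  Once this is in place, invoking Theorem~\ref{thm:MIF} and Proposition~\ref{prop:mobius_PDS} and comparing degrees via Theorem~\ref{thm:Hilbert_FactIsQuasi} is routine bookkeeping.
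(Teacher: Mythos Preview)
Your proof is correct and follows essentially the same route as the paper's: both apply M\"obius inversion on $\mathcal{P}_k$ with Proposition~\ref{prop:mobius_PDS}, the only cosmetic difference being that the paper sets $f(\hat 0) = |E(\nabla_S(n))|$ (so that $g(\hat 0) = \binom{|\mathsf{Z}_S(n)|}{2}$ and inversion yields the edge count directly), whereas you set $f(\hat 0) = 0$ and subtract the non-edge count $g(\hat 0)$ at the end. Your explicit justification of $g(\{I,J\}) = |\mathsf{Z}_I(n)||\mathsf{Z}_J(n)|$ via the disjointness of $\mathsf{Z}_I(n)$ and $\mathsf{Z}_J(n)$ for $n \ge 1$ is in fact a touch more careful than the paper's.
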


\begin{proof}
Define $f: \mathcal{P}_k \rightarrow \ZZ_{\ge 0}$ with $f(\hat0) = |E(\nabla_S(n))|$ and $f(\set{I,J})$ equaling the number of nonedges $\set{\zz,\zz'}$ in $\nabla_S(n)$ such that $\supp(\zz) = I$ and $\supp(\zz') = J$.  Define $g:\mathcal{P}_k \rightarrow \ZZ_{\ge 0}$ such that
$$
g(\hat0)
= f(\hat0)+\sum_{\set{I,J} \succ \hat0} f(\set{I,J})
= \binom{|\mathsf{Z}_{S}(n)|}{2}
$$
equals the total number of edges in the complete graph on $|\mathsf{Z}_{S}(n)|$ vertices, and 
$$
g(\{I,J\})
= \sum_{X \succeq \{I,J\}} f(X)
= |\mathsf{Z}_I(n)||\mathsf{Z}_J(n)|$$
equals the number of nonedges $\set{\zz, \zz'}$ in $\nabla_S(n)$ such that $\supp(\zz) \subseteq I$ and $\supp(\zz') \subseteq J$.  
By Theorem \ref{thm:MIF} and Proposition \ref{prop:mobius_PDS}, we have
$$
|E(\nabla_S(n))|
= f(\hat0)
=\sum_{X \succeq \hat0}g(X)\mu(\hat0,X)
% = g(\hat0)\mu(\hat0,\hat0)+\sum_{\set{I,J} \succ \hat0}g(\set{I,J})\mu(\hat0,\set{I,J})
= \binom{|\mathsf{Z}_{S}(n)|}{2} + \sum_{\set{I,J} \succ \hat0} (-1)^{k-|I|-|J|+1} |\mathsf{Z}_I(n)||\mathsf{Z}_J(n)|.
$$
The remaining claims then follow from the fact that 
$$
\binom{|Z_S(n)|}{2}
\qquad \text{and} \qquad
|\mathsf{Z}_I(n)||\mathsf{Z}_J(n)|
$$
are quasipolynomials by Theorem~\ref{thm:Hilbert_FactIsQuasi}, each with period dividing $\lcm(\tfrac{n_1}{c}, \ldots, \tfrac{n_k}{c})$, and the former attaining maximum degree in the expression for $|E(\nabla_S(n))|$.  
\end{proof}

%%%%%%%%%%%%%%%%%%%%%%%%%%%%%%%%%%%%%%%%%%%%%%
\subsection{Trade graphs}%%%%%%%%%%%%%%%%%%%%%
%%%%%%%%%%%%%%%%%%%%%%%%%%%%%%%%%%%%%%%%%%%%%%

The second family of graphs requires a few additional definitions.  
The \emph{factorization homomorphism of $S$} to be the map $\varphi:~{\ZZ}_{\geq 0}^k \rightarrow S$ given by
$$\varphi\left(z_1,\ldots,z_k\right) = z_1n_1 + \cdots + z_kn_k,$$
which sends each $k$-tuple $\zz \in \mathsf Z_S(n)$ to the element $n \in S$ it is a factorization of.  In~particular, $\mathsf Z_S(n) = \varphi^{-1}(n)$ for each $n \in S$.  We have
$$\ker\varphi = \set{(\zz,\zz') \in {\ZZ}_{\geq 0}^k \times {\ZZ}_{\geq 0}^k : \varphi(\zz)=\varphi(\zz')}.$$ 
The kernel of $\varphi$ is a \emph{congruence}; that is, an equivalence relation respecting \emph{translation}, that is, if $(\zz, \zz') \in \ker\varphi$, then $(\zz + \mathbf{v}, \zz' + \mathbf{v}) \in \ker\varphi$ for all $\mathbf{v} \in {\ZZ}_{\geq 0}^k$.  The elements of $\ker\varphi$ are called \emph{trades} of $S$.  
Given $\mathbf{z}, \mathbf{z}' \in \mathsf{Z}_S(n)$, we define the \emph{greatest common divisor} of $\mathbf{z}$ and $\mathbf{z}'$ to be $$\gcd(\zz,\zz')= \left(\min(z_1,z'_1), \ldots, \min(z_k,z'_k)\right).$$
The trade $(\zz-\gcd(\zz,\zz'),\zz'-\gcd(\zz,\zz'))$ is called the \emph{direct trade between $\zz$ and $\zz'$}.  

Given a numerical semigroup $S$, a \emph{presentation of $S$} is a set $\rho \subseteq \ker\varphi$ such that for each element $n\in S$, any two factorizations $\zz,\zz' \in \mathsf{Z}(n)$ are \emph{connected by a sequence of trades} in $\rho$, that is, 
there exists a sequence $\ww_0, \ww_1, \ldots, \ww_r \in \mathsf Z_S(n)$ such that $\ww_0 = \zz$, $\ww_r = \zz'$, and for each $i$, at least one of the the direct trades $(\ww_i,\ww_{i+1})$ and $(\ww_{i+1}, \ww_i)$ lies in $\rho$.  
A presentation $\rho$ of $S$ is called \emph{minimal} if it is minimal with respect to containment among all presentations of $S$.  It is known that all minimal presentations of $S$ are finite and have equal cardinality.  If $\rho$ is minimal, the elements of the set
$$\Betti(S) = \{\varphi(\tt_i) : (\tt_i, \tt_i') \in \rho\}$$
are called \emph{Betti elements}, and do not depend on the choice of minimal presentation \cite{numericalappl}.

\begin{defn}\label{d:min_trade_graph}
Fix a presentation $\rho$ of $S$. The \emph{trade graph of $n$ with respect to $\rho$} is the simple graph $\mathrm{T}_{S, \, \rho}(n)$ with vertex set $\mathsf{Z}_{S}(n)$ and an edge connecting two vertices $\zz$ and $\zz'$ whenever the direct trade between $\zz$ and $\zz'$ lies in $\rho$.  
\end{defn}

\begin{example}\label{ex:trades_example}
Let $S = \<6, 9, 20\>$.  
It turns out that any two factorizations of $n \in S$ are connected by a sequence of trades in the set 
$$\rho = \set{((3,0,0),(0,2,0)),((4,4,0),(0,0,3))},$$
and omitting either trade causes this to fail for $n = 18$ and $n = 60$, respectively, making $\rho$ a minimal presentation of $S$.  As such, $\Betti(S) = \set{18,60}$.  In general, both $\rho$ and the set $\Betti(S)$ can be computed as detailed in~\cite{mcnuggetdistances}.  

Let us briefly examine $\mathsf Z_S(78)$, whose elements are depicted as vertices in Figure~\ref{fig:tradeGraph}.  Two factorizations therein are connected by a thick green edge if the direct trade between them lies in $\rho$ (horizontal for $((3,0,0),(0,2,0))$, and vertical otherwise).  Since $\rho$ is a presentation of $S$, the resulting graph is assuredly connected.   If we instead used the (non-minimal) presentation
$$\rho' = \set{((3,0,0),(0,2,0)),((4,4,0),(0,0,3)), ((7,2,0),(0,0,3))}$$
for $S$, the remaining edges in Figure~\ref{fig:tradeGraph} (both diagonal) would also be included.  
\end{example}

\begin{figure}[t]
\centering
\begin{tikzpicture}
%vertices
\node[shape=circle,draw=black, label={below:$(13,0,0)$}] (A) at (-4,0){};
\node[shape=circle,draw=black, label={below:$(10,2,0)$}] (B) at (-2,0){};
\node[shape=circle,draw=black, label={below:$(7,4,0)$}] (C) at (0,0){};
\node[shape=circle,draw=black, label={below:$(4,6,0)$}] (D) at (2,0){};
\node[shape=circle,draw=black, label={below:$(1,8,0)$}] (E) at (4,0){};
\node[shape=circle,draw=black, label={right:$(0,2,3)$}] (F) at (2,2){};
\node[shape=circle,draw=black, label={left:$(3,0,3)$}] (G) at (0,2){};
%edges in minimal (subgraph)
\path [-] (A) edge[Green, very thick] node[right] {} (B);
\path [-] (B) edge[Green, very thick] node[right] {} (C);
\path [-] (C) edge[Green, very thick] node[right] {} (G);
\path [-] (C) edge[Green, very thick] node[right] {} (D);
\path [-] (G) edge[Green, very thick] node[right] {} (F);
\path [-] (F) edge[Green, very thick] node[right] {} (D);
\path [-] (D) edge[Green, very thick] node[right] {} (E); 

%extra edges for non-minimal
\path [-] (F) edge node[right] {} (C); 
\path [-] (B) edge node[right] {} (G); 
\end{tikzpicture}
\caption{Trade graphs with respect to $\rho'$ (the full graph) and $\rho$ (the green subgraph) in Example \ref{ex:trades_example}.}
\label{fig:tradeGraph}
\end{figure}
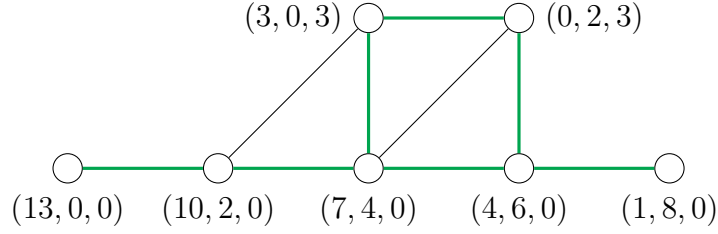

\begin{thm}\label{thm:min_prez_edges}
Fix $S = \gen{n_1, \ldots, n_k}$ and a finite presentation $\rho = \{(\tt_1,\tt'_1),\ldots, (\tt_{r},\tt'_{r})\}$ of $S$.  For each $(\tt_i,\tt'_i) \in \rho$, let $\beta_i = \varphi(\tt_i)$. 
If $n \in S$ with $n \geq \max(\beta_1, \ldots, \beta_r)$, then 
the number of edges in the trade graph $\mathrm{T}_{S, \, \rho}(n)$ is given by 
$$|E(\mathrm{T}_{S, \, \rho}(n))| = \sum_{i=1}^{r} |\mathsf{Z}(n-\beta_i)|.$$
In particular, $|E(\mathrm{T}_{S, \, \rho}(n))|$ coincides for large $n$ with a degree $k-1$ quasipolynomial with leading coefficient 
$$\frac{c|\rho|}{(k-1)!n_1 \cdots n_k}$$
and period dividing $\lcm(n_1, \ldots, n_k)$, where $c = \gcd(n_1, \ldots, n_k)$.  
\end{thm}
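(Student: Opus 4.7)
The plan is to prove the edge-count formula by constructing, for each trade $(\tt_i, \tt'_i) \in \rho$, a bijection between $\mathsf{Z}_{S}(n - \beta_i)$ and the set of edges in $\mathrm{T}_{S, \rho}(n)$ whose underlying direct trade is $(\tt_i, \tt'_i)$ up to swap, and then summing over $i$.  The bijection sends $\mathbf v \in \mathsf Z_S(n - \beta_i)$ to the edge $\{\tt_i + \mathbf v, \tt'_i + \mathbf v\}$, which is well-defined because $\varphi(\tt_i + \mathbf v) = \beta_i + (n - \beta_i) = n$, so both endpoints do lie in $\mathsf Z_S(n)$.

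First I would observe that without loss of generality each element of $\rho$ is itself a direct trade, i.e.\ $\gcd(\tt_i, \tt'_i) = \mathbf 0$, and no element of $\rho$ is the reverse of another; any $(\tt_i, \tt'_i) \in \rho$ with nonzero common part can never serve as a direct trade between two factorizations and may be deleted from $\rho$ without altering $\mathrm T_{S, \rho}(n)$, while a reversal would merely duplicate edges.  Under these conventions, $\gcd(\tt_i + \mathbf v, \tt'_i + \mathbf v) = \mathbf v$, so the direct trade between $\tt_i + \mathbf v$ and $\tt'_i + \mathbf v$ equals $(\tt_i, \tt'_i)$ exactly.  Injectivity is immediate since $\tt_i \ne \tt'_i$, and surjectivity follows by recovering $\mathbf v = \gcd(\zz, \zz')$ from any edge $\{\zz, \zz'\}$ whose direct trade is $(\tt_i, \tt'_i)$.

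Because every edge of $\mathrm T_{S, \rho}(n)$ has a uniquely determined direct trade up to swap, the bijections for distinct $i$ have pairwise disjoint images.  Summing over $i$ then gives the claimed closed form $|E(\mathrm T_{S, \rho}(n))| = \sum_{i=1}^r |\mathsf Z_S(n - \beta_i)|$, and the hypothesis $n \ge \max(\beta_1, \ldots, \beta_r)$ simply guarantees that every $n - \beta_i$ is a nonnegative element of $S$ (so that the right-hand side is described by the quasipolynomial of Theorem~\ref{thm:Hilbert_FactIsQuasi} rather than truncated to $0$).

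For the quasipolynomial consequence, I would apply Theorem~\ref{thm:Hilbert_FactIsQuasi} termwise.  Since $c \mid \beta_i$ and $c \mid n$ we have $c \mid (n - \beta_i)$, so each summand $|\mathsf Z_S(n - \beta_i)|$ coincides for large $n$ with a degree $k-1$ quasipolynomial in $n$ whose leading coefficient $\tfrac{c}{(k-1)!\, n_1 \cdots n_k}$ is constant (not periodic) and whose period divides $\lcm(n_1, \ldots, n_k)$.  Summing $r = |\rho|$ such quasipolynomials preserves the degree, produces the stated leading coefficient $\tfrac{c\,|\rho|}{(k-1)!\, n_1 \cdots n_k}$ with no cancellation because the constant leading coefficients just add, and keeps the period dividing $\lcm(n_1, \ldots, n_k)$.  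I expect the main obstacle to be cleanly justifying the two reductions on $\rho$ above, so that the $r$ bijections tile the edge set of $\mathrm T_{S, \rho}(n)$ exactly once; after that, everything is a transparent application of Theorem~\ref{thm:Hilbert_FactIsQuasi}.
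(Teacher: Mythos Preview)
Your approach is essentially the paper's: both arguments observe that each edge of $\mathrm T_{S,\rho}(n)$ is uniquely determined by a choice of $i \in [r]$ together with $\mathbf u = \gcd(\zz,\zz') \in \mathsf Z_S(n-\beta_i)$, and then invoke Theorem~\ref{thm:Hilbert_FactIsQuasi} termwise.  Your explicit attention to the hypotheses $\gcd(\tt_i,\tt'_i)=\mathbf 0$ and $(\tt_i,\tt'_i)\ne(\tt'_j,\tt_j)$ is more careful than the paper (which assumes them tacitly), though note that your ``delete from $\rho$'' justification would alter the right-hand side $\sum_i |\mathsf Z(n-\beta_i)|$, so these should be read as standing conventions on $\rho$ rather than true WLOG reductions.
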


\begin{proof}
Suppose $\zz, \zz' \in \mathsf Z_S(n)$ share an edge in $\mathrm T_{S,\rho}(n)$.  Writing $\mathbf u = \gcd(\zz, \zz')$, it is clear that $\gcd(\zz - \mathbf u, \zz' - \mathbf u) = 0$, and in fact, after possibly relabeling $\zz$ and $\zz'$, 
$$(\zz - \mathbf u, \zz' - \mathbf u) = (\tt_i, \tt_i') \in \rho$$
for some $i$.  The key observation is then that each edge in the graph $\mathrm T_{S,\rho}(n)$ is uniquely determined by a choice of $i \in [1, r]$ and a factorization $\mathbf u \in \mathsf Z_S(n - \beta_i)$.  
Therefore, the total number $|E(\mathrm{T}_{S, \, \rho}(n))|$ of edges in the trade graph $\mathrm{T}_{S, \, \rho}(n)$ is 
$$
|E(\mathrm{T}_{S, \, \rho}(n))| = \sum_{i=1}^{r}|\mathsf{Z}_S(n-\beta_i)|.
$$
The remaining claims now follow from Theorem~\ref{thm:Hilbert_FactIsQuasi}.  
\end{proof}

\begin{example}\label{e:repeatedbetti}
In the statement of Theorem~\ref{thm:min_prez_edges}, it is possible that $\beta_i = \beta_j$ for $i \ne j$.  For example,  if $S = \<6,10,15\>$, then $\beta_1 = \beta_2 = 30$ and $\Betti(S) = \{30\}$, even though any minimal presentation $\rho$ of $S$ has $|\rho| = 2$.  
\end{example}

%%%%%%%%%%%%%%%%%%%%%%%%%%%%%%%%%%%%%%%%%%%%%%%%%%%%%%%%%%%%%%%%%%%%%%%%%
\section{An interesting cube complex arises}%%%%%%%%%%%%%%%%%%%%%%%%%%%%%
\label{sec:cubecomplexes}%%%%%%%%%%%%%%%%%%%%%%%%%%%%%%%%%%%%%%%%%%%%%%%%
%raggedbottom%%%%%%%%%%%%%%%%%%%%%%%%%%%%%%%%%%%%%%%%%%%%%%%%%%%%%%%%%%%%

In Definition~\ref{d:posetds}, we introduced the poset $\mathcal{P}_k$ of disjoint supports on $k$ elements, which was designed to encapsulate the structure of counting the nonedges in $\nabla_S(n)$.  In this section, we provide a geometric interpretation of $\mathcal{P}_k$ in Corollary~\ref{c:real_projective_space}.  
This draws on topics from the geometry of polytopes; we give a brief overview of the definitions and notation used, based on~\cite{ziegler} unless otherwise stated.

A \emph{polytope} $P$ is the convex hull of a finite set of points $K = \set{\mathbf{x_1}, \ldots, \mathbf{x_n}} \subseteq \RR^d$, i.e.,
$$
P=\Conv{(K)} = \set{\lambda_1\mathbf{x_1} + \cdots + \lambda_n\mathbf{x_n} : \lambda_i \in \RR_{\ge 0}, \, \lambda_1 + \cdots + \lambda_n = 1}.
$$ 
The \emph{dimension} of $P$ is the $\RR$-vector space dimension of its affine hull.  A \emph{face} of $P$ is a set of the form $F = P \cap H$ where $H$ is the boundary of a half-space containing $P$.  The~0-dimensional faces of $P$ are called its \emph{vertices}, and their convex hull equals $P$.  
The~\emph{face lattice} of $P$ is the poset $L(P)$ of all faces of $P$, partially ordered by inclusion.

Following~\cite{canonicalcutshypercube}, the \emph{$d$-dimensional unit hypercube} (or \emph{$d$-cube}) is the polytope 
$$C_d = \set{(x_1, \ldots, x_d) \in \RR^d : 0 \leq x_j \leq 1}$$
whose vertex set $V_d$ consists of the points $\mathbf{x}$ such that each $x_j \in \set{0,1}$. Each face $F$ of $C_d$ with dimension $r$ is a set of $\mathbf{x} \in C_d$ satisfying exactly $d-r$ equations of the form 
$$
x_{j_1} = \cdots = x_{j_i} = 0 
\qquad \text{or} \qquad
x_{j_{i+1}} = \cdots = x_{j_{d-r}} = 1
$$
where $I = \set{j_1,\ldots, j_i}$ and $J = \set{j_{i+1}, \ldots, j_{d-r}}$ are disjoint.
The set $[d]$ can be partitioned into 
\begin{align*}
N(F)^{+} &= \set{j \in \set{1,\ldots, d} : x_j = 1 \text{ for all } \mathbf{x} \in V_d \cap F},\\
N(F)^{-} &= \set{j \in \set{1,\ldots, d} : x_j = 0 \text{ for all } \mathbf{x} \in V_d \cap F}, \text{ and}\\
N(F)^0 &= \set{1,\ldots, d} \sm \left(N(F)^{+} \cup N(F)^{-}\right).
\end{align*}
based on the coordinates of the vertices of $F$.  By \cite[Corollary~7.17]{ziegler}, any choice of 3 disjoint subsets $I, J, K \subseteq [k]$ whose union is $[k]$ uniquely determines a face $F$ with $N(F)^+ = I$, $N(F)^- = J$, and $N(F)^0 = K$.  

\begin{defn}\label{d:ordered_poset}
The \emph{ordered poset of disjoint supports on $k$ elements} is the poset $(P_k, \preceq)$ 
whose ground set is the ordered pairs $(I,J)$ such that $I, J \subseteq [k]$ are nonempty, proper disjoint subsets, together with a unique minimum element $\hat 0$, such that 
$(R,T) \preceq (I,J)$ when $I \subseteq R$ and $J \subseteq T$.
\end{defn}

\begin{prop}\label{prop:ideal_is_cube}
For any distinct $m, n \in [k]$, the principal ideal 
$$P = \set{ (I,J) :  \set{I, J} \in P_{m,n}, m \in I, n \in J} \cup \set{\hat 0}$$
generated by the maximal element $(m,n) \in P_k$ is isomorphic as a poset to the face lattice $L(C_{k-2})$, as is the principal ideal generated by $\set{m,n}$ in $\mathcal P_k$.  
\end{prop}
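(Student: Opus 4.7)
The plan is to construct an explicit order isomorphism $\phi \colon P \to L(C_{k-2})$ by identifying the index set $[k-2]$ of the cube with $[k] \setminus \set{m, n}$. I would set $\phi(\hat 0) = \emptyset$ (the empty face), and for $(R, T) \in P \setminus \set{\hat 0}$ send $(R, T)$ to the unique face $F$ of $C_{k-2}$ with $N(F)^+ = R \setminus \set{m}$, $N(F)^- = T \setminus \set{n}$, and $N(F)^0 = [k] \setminus (R \cup T)$.

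First I would verify that $\phi$ is a bijection. Every $(R,T) \in P \setminus \set{\hat 0}$ satisfies $m \in R$, $n \in T$, and $R \cap T = \emptyset$, so the three sets $R \setminus \set{m}$, $T \setminus \set{n}$, $[k] \setminus (R \cup T)$ are disjoint subsets of $[k] \setminus \set{m,n}$ whose union is $[k] \setminus \set{m,n}$. By the cube-face parametrization stated just before Definition~\ref{d:ordered_poset} (invoking \cite[Corollary 7.17]{ziegler}), this triple corresponds to a unique nonempty face of $C_{k-2}$. Conversely, any nonempty face yields $(R,T) = (N(F)^+ \cup \set{m},\, N(F)^- \cup \set{n}) \in P \setminus \set{\hat 0}$, where disjointness is automatic and properness follows from $n \notin R$ and $m \notin T$.

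Next I would check order-preservation. By Definition~\ref{d:ordered_poset}, for two non-$\hat 0$ elements one has $(R,T) \preceq (R',T')$ iff $R' \subseteq R$ and $T' \subseteq T$, which under $\phi$ translates to $N(F')^+ \subseteq N(F)^+$ and $N(F')^- \subseteq N(F)^-$. This is precisely the condition that the system of coordinate-fixing equalities defining $F'$ is a subsystem of that defining $F$, equivalently $F \subseteq F'$ as subsets of $C_{k-2}$, i.e., $F \leq F'$ in $L(C_{k-2})$. Since $\hat 0$ and $\emptyset$ are the respective global minima, $\phi$ preserves order throughout. For the unordered version, each $\set{R,T}$ in the principal ideal generated by $\set{\set{m},\set{n}}$ in $\mathcal P_k$ admits a canonical ordered form with $m \in R$ and $n \in T$ (unambiguous since $R \cap T = \emptyset$), giving a poset isomorphism onto $P$; the same conclusion then follows.

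The main bookkeeping obstacle is the orientation of $\preceq$: the definition in $P_k$ reverses set containment (larger sets sit lower), while the face lattice is ordered by containment. These two reversals cancel when passing through $\phi$, but one has to track them carefully to confirm that the result is a genuine isomorphism rather than an anti-isomorphism. Once that is in hand, the verification is essentially a matching of three-block partitions on both sides.
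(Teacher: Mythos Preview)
Your proposal is correct and follows essentially the same approach as the paper: both construct an explicit bijection to $L(C_{k-2})$ via the $(N^+,N^-,N^0)$ parametrization of cube faces (invoking \cite[Corollary~7.17]{ziegler}) and then check that the containment conditions match up, with the unordered case handled by the unique ordered representative with $m\in R$, $n\in T$. Your write-up is in fact slightly more explicit than the paper's in tracking the removal of $m$ and $n$ and in stating the ``iff'' that guarantees the inverse is also order-preserving.
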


\begin{proof}
Up to permutation of $[k]$, we may assume $m = k-1$ and $n = k$, so that each $(I,J) \in P_{m,n}$ has $I, J \subseteq [k-2]$.  
Let $\phi: P_{m,n} \rightarrow L(C_{k-2})$ be the map with $\hat 0 \mapsto \es$ that sends each $(I,J)$ to the face $F \in L(C_{k-2})$ with $N(F)^- = I$ and $N(F)^+ = J$.  This map is a bijection by \cite[Corollary~7.17]{ziegler}, and whenever $(R, T) \preceq (I,J)$, each facet containing $\phi(I,J)$ also contains $\phi(R,T)$, so $\phi$ is order-preserving.  

Lastly, Considering the order-preserving map $P_k \to \mathcal P_k$ sending each $(I,J) \mapsto \{I,J\}$, the final claim then follows from the fact that each $\{I,J\} \preceq \{m,n\}$ is the image of exactly one element of $P_{m,n}$.  
\end{proof}

Proposition \ref{prop:ideal_is_cube} characterizes each principal ideal generated by a maximal element of $\mathcal P_k$ as the face lattice of a $(k-2)$-cube.  Since every face of a cube is itself a cube, every principal ideal in $\mathcal P_k$ is isomorphic to the face lattice of a cube.  Additionally, the intersection of two principal ideals (say with generators $(I,J)$ and $(I',J')$, respectively), is again a principal ideal (namely the one generated by $(I \cap I', J \cap J')$).  This endows $P_k$ with the structure of a \emph{cubical complex}; that is, a collection $\mathcal C$ of cubes satisfying (i) any face of a cube $C \in \mathcal C$ is again a cube in $\mathcal C$, and (ii) the intersection of any two cubes in $\mathcal C$ again lies in~$\mathcal C$.  The~same can also be said for $\mathcal P_k$ by Proposition~\ref{prop:ideal_is_cube}.  

\begin{example}\label{e:cubical_complex}
The principal ideal generated by any maximal element $\set{m,n} \in \mathcal{P}_4$ is isomorphic to the face lattice of a square.  Viewing $\mathcal P_4$ as a cubical complex provides a recipe to ``glue" all such squares together along edges/vertices whenever those principal ideals coincide in $\mathcal P_4$.  This results in the cubical complex depicted on the left in Figure~\ref{fig:cubical_complex}, with opposing ``outside'' edges identified (e.g., $(3,14)$ and $(14,3)$).  
\end{example}

\begin{figure}[t]
\centering
\includegraphics[width=2.5in]{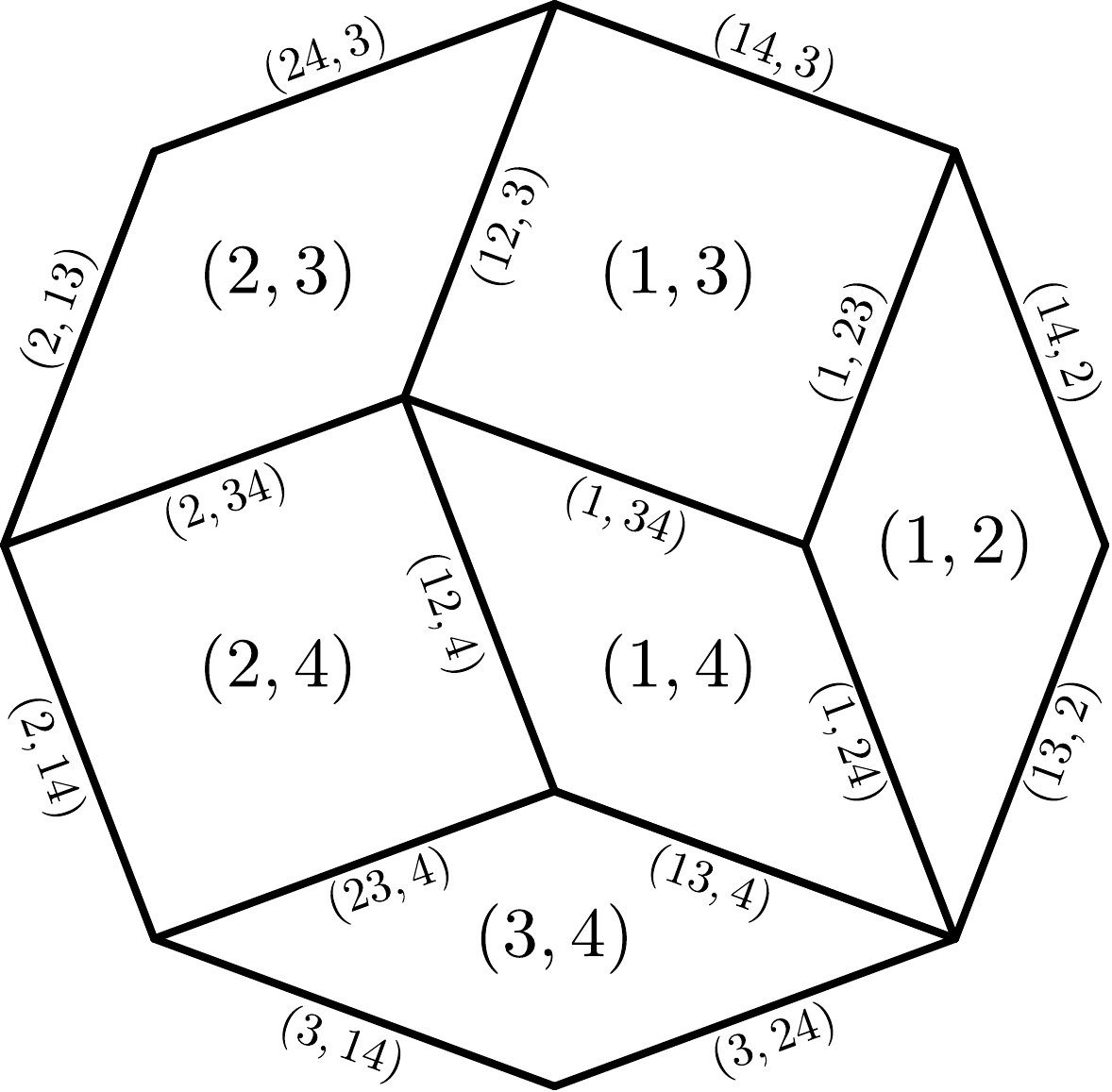}
\qquad
\includegraphics[width=2.5in]{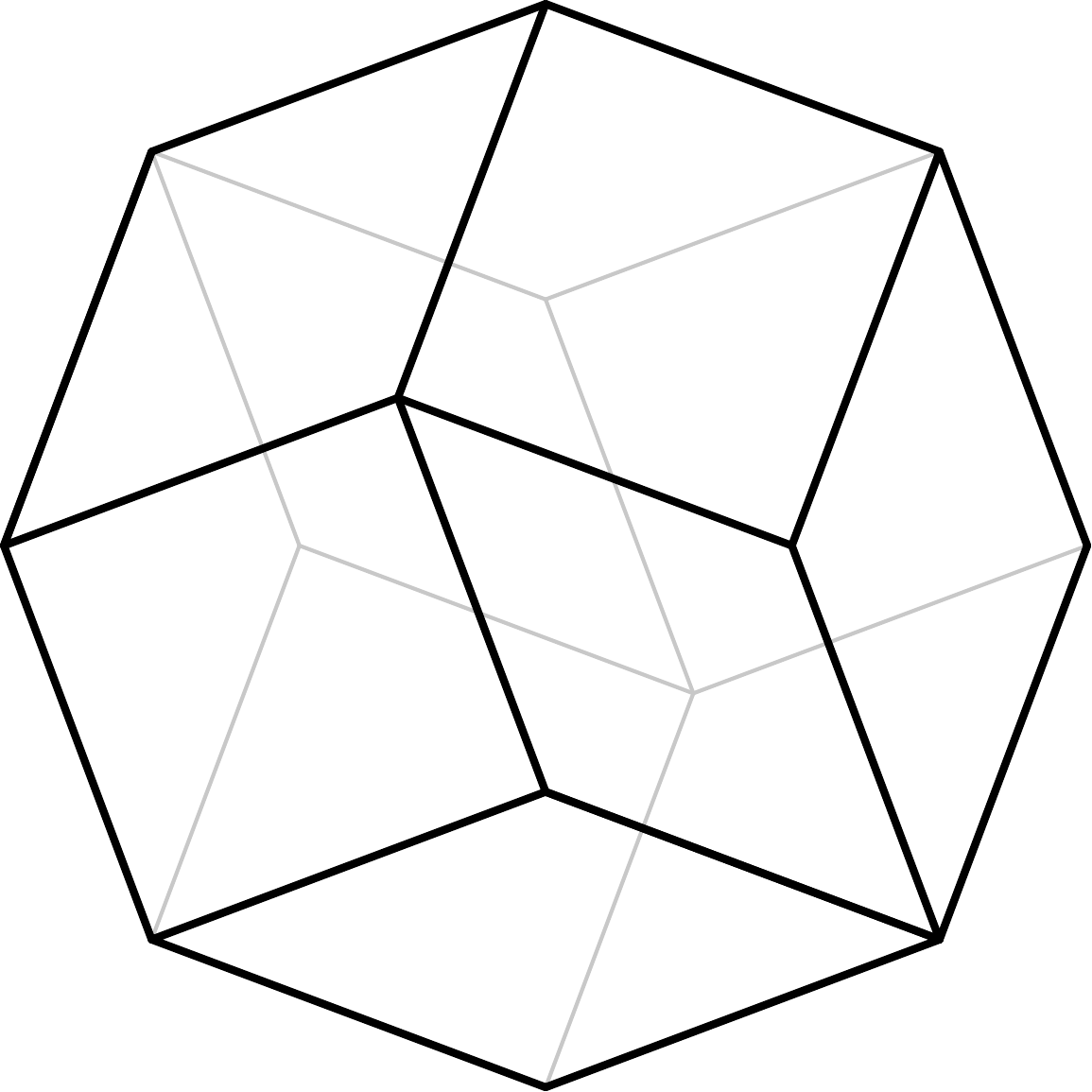}
\caption{The cubical complex corresponding to the poset $(\mathcal{P}_4,\preceq)$ (left) and the zonotope of which it is a projection under Corollary~\ref{c:real_projective_space} (right).}
\label{fig:cubical_complex}
\end{figure}

We show in Theorem~\ref{t:zonotope} that $P_k$, viewed as a cubical complex, coincides with the lattice of proper faces of a polytope (one whose proper faces are all necessarily cubes).  The same cannot be said for $\mathcal P_k$, as Corollary~\ref{c:real_projective_space} implies its realization as a cubical complex has the homotopy type of real projective space, rather than that of a sphere.  

Following~\cite[Chapter~7]{ziegler}, a \emph{zonotope} is the image of a $d$-cube under affine projection; that is, a polytope of the form 
$$Z(\mathbf V) = \mathbf{V}\cdot C_{d} \subseteq \RR^p$$
for some vector configuration $\mathbf{V} \in \RR^{p \times d}$ (encoded as the columns of a matrix). 
The~\emph{value vectors} of $\mathbf{V}$ are the elements of the set
$$\Val(\mathbf{V}) = \set{\mathbf{c}\mathbf{V} : \mathbf{c} \in (\RR^p)^*},$$
and the \emph{signed covectors} of $\mathbf{V}$ are the elements of the set
$$
\mathcal{V}^*(\mathbf{V}) = \set{\sign(\mathbf{c}\mathbf{V}) : \mathbf{c} \in (\RR^p)^*} = \SIGN(\Val(\mathbf{V})),
$$
where the entries of $\sign(\mathbf{c}\mathbf{V})$ lie in $\{-,0,+\}$.  

\begin{prop}[{\cite[Corollary~7.17]{ziegler}}]\label{prop:bijective_from_ziegler}
Let $\mathbf{V} \in \RR^{p \times d}$ be a vector configuration in $\RR^p$.  Then there is a natural bijection between the sign vectors of the nonempty faces of the zonotope $Z(\mathbf{V})$ and the signed covectors of the configuration $\mathbf{V}$.  
\end{prop}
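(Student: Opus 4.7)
The plan is to exploit the linearity of the projection $\mathbf V : C_d \to Z(\mathbf V) \subseteq \RR^p$ together with the standard fact that every nonempty face of a polytope is the set of maximizers of some linear functional. First, for each $\mathbf c \in (\RR^p)^*$ I would identify the face $F_{\mathbf c} \subseteq Z(\mathbf V)$ cut out by $\mathbf c$ with $\mathbf V \cdot \hat F_{\mathbf c}$, where $\hat F_{\mathbf c}$ is the face of $C_d$ on which the pulled-back functional $\mathbf c \mathbf V \in (\RR^d)^*$ attains its maximum. Because $\mathbf c \mathbf V \cdot \mathbf x = \sum_{i=1}^d (\mathbf c \mathbf V)_i\, x_i$ is separable on $C_d = [0,1]^d$, the face $\hat F_{\mathbf c}$ is cut out coordinate-by-coordinate: $x_i = 1$ when $(\mathbf c \mathbf V)_i > 0$, $x_i = 0$ when $(\mathbf c \mathbf V)_i < 0$, and $x_i$ ranges freely in $[0,1]$ when $(\mathbf c \mathbf V)_i = 0$.

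Next, I would observe that this rule shows $\hat F_{\mathbf c}$ depends on $\mathbf c$ only through the sign pattern $\sign(\mathbf c \mathbf V) \in \{-,0,+\}^d$, which is by definition the signed covector of $\mathbf V$ determined by $\mathbf c$. Thus $\sigma \mapsto \hat F_\sigma$ is a well-defined injection $\mathcal V^*(\mathbf V) \into L(C_d)$, and composing with $\mathbf V(\cdot)$ lands in the face lattice of $Z(\mathbf V)$. Because the sign vector of a face $F$ of $Z(\mathbf V)$ is by definition the sign vector of the (maximal) cube face $\hat F$ with $\mathbf V \hat F = F$, encoded via $N(\hat F)^+$, $N(\hat F)^-$, and $N(\hat F)^0$, the bijection $\sigma \leftrightarrow \hat F_\sigma \leftrightarrow \mathbf V \cdot \hat F_\sigma$ between signed covectors and face sign vectors of $Z(\mathbf V)$ is then essentially tautological.

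Finally I would verify the bijectivity. Surjectivity is immediate, since every nonempty face of the zonotope arises from some supporting functional $\mathbf c$, and every such $\mathbf c$ produces a signed covector $\sigma(\mathbf c)$ that maps back to that face. Injectivity follows because distinct signed covectors $\sigma \ne \sigma'$ force distinct cube faces $\hat F_\sigma \ne \hat F_{\sigma'}$ (by construction the coordinates on which they are free differ), and hence distinct labeled zonotope faces.

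The main obstacle lies in justifying the well-definedness assertion that underlies Step~2: that for each face $F$ of $Z(\mathbf V)$, the ``maximal'' preimage $\hat F = \{\mathbf x \in C_d : \mathbf V \mathbf x \in F\}$ is itself a face of the cube, independent of the choice of supporting functional $\mathbf c$. Concretely, one must show that if $\mathbf c$ and $\mathbf c'$ both cut out $F$, then $\sign(\mathbf c \mathbf V) = \sign(\mathbf c' \mathbf V)$. This comes down to the convex-geometric fact that the relative interior of a face of a polytope is determined by the face (not the functional), applied on the cube side: any point in the relative interior of $\hat F_{\mathbf c}$ must also lie in $\hat F_{\mathbf c'}$ since its image in $F$ is attained by $\mathbf c'$ as well, and a separable maximum on $C_d$ pins down each coordinate's sign exactly when it is nonzero. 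Once this is established, the rest of the argument is bookkeeping.
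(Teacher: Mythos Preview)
The paper does not actually prove this proposition: it is quoted verbatim as Corollary~7.17 from Ziegler's \emph{Lectures on Polytopes} and used as a black box in the proof of Theorem~\ref{t:zonotope}. So there is nothing in the paper to compare your argument against.

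That said, your sketch is essentially the standard argument and is correct. The only place worth tightening is the injectivity step. You argue that distinct covectors $\sigma \ne \sigma'$ give distinct cube faces $\hat F_\sigma \ne \hat F_{\sigma'}$ and ``hence distinct labeled zonotope faces.'' Since the statement only asks for a bijection with the \emph{sign vectors} of the zonotope faces (not with the faces themselves), this is enough once you have shown that the sign vector of a face $F$ is well defined, i.e., that the full preimage $\mathbf V^{-1}(F)\cap C_d$ is a single cube face independent of the supporting functional. You flag exactly this as the main obstacle and your proposed fix is right: the set $\hat F=\{\mathbf x\in C_d:\mathbf V\mathbf x\in F\}$ is intrinsically defined from $F$, and for any $\mathbf c$ exposing $F$ one has $\hat F=\{\mathbf x\in C_d:\mathbf c\mathbf V\cdot\mathbf x\text{ is maximal}\}$, which is a cube face whose sign vector is $\sign(\mathbf c\mathbf V)$. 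Since $\hat F$ does not depend on $\mathbf c$, neither does its sign vector, and the bijection drops out.
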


\begin{thm}\label{t:zonotope}
The poset $P_k$ is isomorphic to the face lattice of the boundary of the zonotope given by the affine projection $\pi: \RR^{k} \longrightarrow \RR^{k-1}$ with $\pi(\mathbf{x}) = \mathbf{V} \mathbf{x}$, where 
$$
\mathbf{V}
= \begin{pmatrix} I_{k-1} & \mathbf{1} \end{pmatrix}
= \begin{pmatrix}1 & 0 & \cdots & 0 & 1 \\ 0 & 1 & \cdots & 0 & 1 \\ \vdots & \vdots & \ddots & \vdots & \vdots \\ 0 & 0 & \cdots & 1 & 1  \end{pmatrix}.
$$
\end{thm}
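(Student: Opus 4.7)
The plan is to apply Proposition~\ref{prop:bijective_from_ziegler} to identify the nonempty faces of $Z(\mathbf{V})$ with the signed covectors $\mathcal{V}^*(\mathbf{V})$, and then exhibit an order-preserving bijection between $P_k$ and the boundary face lattice $L(\partial Z(\mathbf{V})) = L(Z(\mathbf{V})) \setminus \{Z(\mathbf{V})\}$.

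First I would compute $\mathcal{V}^*(\mathbf{V})$ explicitly. Since $\mathbf{c} \mathbf{V} = (c_1, \ldots, c_{k-1}, c_1 + \cdots + c_{k-1})$, the signed covectors are precisely the sign vectors $\mathbf{s} \in \{-, 0, +\}^k$ orthogonal (in the oriented-matroid sense) to the unique circuit $\mathbf{z} = (+, +, \ldots, +, -)$ arising from the relation $\mathbf{v}_k = \mathbf{v}_1 + \cdots + \mathbf{v}_{k-1}$. A short inclusion-exclusion removing those sign vectors with $s_i z_i \ge 0$ for all $i$ and those with $s_i z_i \le 0$ for all $i$ (other than $\mathbf{0}$) yields $|\mathcal{V}^*(\mathbf{V})| = 3^k - 2^{k+1} + 2 = |P_k|$.

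Next I would set up the bijection $\phi : P_k \to L(\partial Z(\mathbf{V}))$ by sending $\hat 0$ to the empty face, and $(I, J) \in P_k \setminus \{\hat 0\}$ to the face determined by the signed covector $\mathbf{s}$ with $\{i : s_i = +\} = I$ and $\{i : s_i = -\} = J$ whenever such a covector exists. In the remaining boundary cases, where the naive sign assignment violates orthogonality to $\mathbf{z}$, a circuit correction produces the correct covector. The associated face has dimension $k - |I| - |J|$, which matches the rank of $(I, J)$ in $P_k$, and cover relations in $P_k$ (obtained by adjoining one index to $I$ or $J$) correspond to codimension-one containments in $L(\partial Z(\mathbf{V}))$.

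Order preservation then follows by combining this dimensional match with Proposition~\ref{prop:ideal_is_cube}: each principal ideal of $P_k$ generated by a maximal singleton pair $(\{m\}, \{n\})$ is isomorphic to the face lattice of a $(k-2)$-cube, and each facet of $Z(\mathbf{V})$ is itself such a cube, so it suffices to verify that principal ideals and facet face lattices are matched consistently with intersections preserved. The main obstacle is specifying the circuit-based correction in the boundary cases and showing it extends coherently for all $k$---for $k = 3$, for instance, $(\{1, 2\}, \{3\})$ and $(\{3\}, \{1, 2\})$ map to the full-sign covectors $(-, -, -)$ and $(+, +, +)$ rather than to the invalid naive covectors $(+, +, -)$ and $(-, -, +)$.
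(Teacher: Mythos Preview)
Your overall strategy---identify boundary faces of $Z(\mathbf V)$ with nonzero covectors via Proposition~\ref{prop:bijective_from_ziegler}, then build an explicit bijection with $P_k$---is exactly the paper's approach, and your cardinality count $|P_k| = 3^k - 2^{k+1} + 2 = |\mathcal V^*(\mathbf V)|$ is correct.  The gap is in the bijection itself.

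You define $\phi(I,J)$ by the ``naive'' sign vector with $+$'s on $I$ and $-$'s on $J$ whenever that vector is a covector, and appeal to an unspecified ``circuit correction'' otherwise.  The boundary cases where the naive vector is \emph{not} a covector are precisely $I = \{k\}$ or $J = \{k\}$, and your $k=3$ example shows that in those cases you intend to flip the signs on the non-$k$ side (e.g.\ $(\{1,2\},\{3\}) \mapsto (-,-,-)$).  But no correction confined to these boundary cases can make $\phi$ order-preserving.  For $k = 4$, consider the cover relation $(\{1\},\{2,4\}) \prec (\{1\},\{4\})$.  The first pair is \emph{not} a boundary case, so your naive rule assigns it the covector $(+,-,0,-)$; the second \emph{is} a boundary case, and your correction assigns it $(-,0,0,-)$.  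These covectors disagree in sign at coordinate~$1$, so the corresponding faces are not incident, and $\phi$ fails to respect the order.  The appeal to Proposition~\ref{prop:ideal_is_cube} does not rescue this: that proposition gives an abstract isomorphism of each principal ideal with a cube face lattice, but says nothing about whether \emph{your} particular map restricts to such an isomorphism, and the example shows it does not.

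The paper resolves this by applying a uniform twist to \emph{every} pair with $k \in I \cup J$, not just the boundary ones: whenever $k$ lies in one part, it is moved to the other part before reading off signs.  Concretely (in the paper's $R\to -$, $T\to +$ convention), $(R,T)$ maps to the covector with minus-set $R\setminus\{k\}$ and plus-set $T\cup\{k\}$ when $k\in R$, and symmetrically when $k\in T$.  This single explicit formula handles all cases at once, and order preservation becomes a one-line coordinatewise check.  Your plan would succeed if you replaced the local boundary patch with this global twist.
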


\begin{proof}
We proceed via Proposition~\ref{prop:bijective_from_ziegler}.  
The value vectors of $\mathbf{V}$ are those of the form
$$
\mathbf{c}\mathbf{V}
=  \begin{pmatrix} c_1 & \cdots & c_{k-1} & \sum_{i=1}^{k-1}c_i \end{pmatrix}.
$$
for a row vector $\mathbf{c} = \begin{pmatrix} c_1 & \cdots & c_{k-1} \end{pmatrix} \in (\RR^{k-1})^{*}$.  
Observe that the first $d-1$ coordinates in $\mathbf{cV}$ can be any value $c_i \in \RR$ , and so can take on any sign value. The sign of the $k$th coordinate is restricted to the possible signs that result from summing the first $k-1$ coordinates.  Any nonzero sign vector $\mathbf{u} \in \mathcal{V}^*(\mathbf{V})$ thus falls into one of two types:
\begin{enumerate}[(i)]
\item 
$u_k = 0$ and $\mathbf{u}$ has at least one + and one - in the first $k-1$ entries; or

\item 
$u_k \ne 0$ and $u_i = u_k$ for some $i$.  

\end{enumerate}
For each $\mathbf{u} \in \mathcal{V}^*(\mathbf{V})$, define
$$
V(\mathbf{u})^- = \set{i \in [k]: v_i = -}
\qquad \text{and} \qquad
V(\mathbf{u})^+  = \set{i \in [k]: v_i = +},
$$
which together uniquely determine $\mathbf{u}$.  Let $f:P_k \to \mathcal{V}^*(\mathbf{V})$ be given by $f(\hat 0) = \mathbf{0}$ and $f(R,T) = \mathbf{v}$, with
$$
V(\mathbf{v})^- = \begin{cases}
R & \text{if } k \not\in R \cup T; \\
R \sm \set{k} & \text{if } k \in R; \\
R \cup \set{k} & \text{if } k \in T,
\end{cases}
\qquad \text{and} \qquad
V(\mathbf{v})^+ = \begin{cases}
T  & \text{if } k \not\in R \cup T; \\
T \cup \set{k} & \text{if } k \in R; \\
T \sm \set{k} & \text{if } k \in T,
\end{cases}
$$
Notice if $k \notin R \cup T$, then $\mathbf{v}$ is type~(i) since $R, T \ne \emptyset$, and if $k \in R$, then $|V(\mathbf{v})^+| \ge 2$, so $\mathbf{v}$ is type~(ii).  A symmetric argument for $k \in T$ ensures $f(R,T) \in \mathcal{V}^*(\mathbf{V})$.  

Now, $f$ is a bijection,
with $f^{-1}(\mathbf{v}) = (R,T)$ given by 
$$
R = \begin{cases}
V(\mathbf{v})^- & \text{if } v_k = 0; \\
V(\mathbf{v})^-\cup\set{k} & \text{if } v_k = +; \\
V(\mathbf{v})^-\sm\set{k} & \text{if } v_k = -,
\end{cases}
\qquad \text{and} \qquad
T = \begin{cases}
 V(\mathbf{v})^+ & \text{if } v_k = 0; \\
 V(\mathbf{v})^+\sm\set{k} & \text{if } v_k = +; \\
 V(\mathbf{v})^+\cup\set{k} & \text{if } v_k = -,
\end{cases}
$$
for each nonzero $\mathbf{v} \in \mathcal{V}^*(\mathbf{V})$.  It remains to prove $f$ induces an order-preserving map from $P_k$ to the face lattice of the zonotope  $Z(\mathbf{V})$.  To this end, supposing $(R,T) \preceq (I, J)$ and letting $\mathbf{v} = f(R,T)$ and $\mathbf{u} = f(I,J)$, we must show $u_i \in \{0, v_i\}$ for each $i$.  For each $i < k$, we indeed have $u_i \in \{0,v_i\}$ since $I \subseteq R$ and $J \subseteq T$.  If $k \notin R \cap T$, then $u_k = v_k = 0$; if $k \in R \setminus I$, then $u_k = 0$ and $v_k = +$; and if $k \in I$, then $u_k = v_k = +$.  
A~symmetric argument for the case $k \in T$ completes the proof.  
\end{proof}

\begin{cor}\label{c:real_projective_space}
When viewed as a cubical complex, $\mathcal{P}_k$ is homeomorphic to the real projective space $\RR\mathbb{P}^{k-2}$.  \qed
\end{cor}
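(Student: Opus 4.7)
The plan is to derive the corollary from Theorem~\ref{t:zonotope} by realizing $\mathcal{P}_k$ as the quotient of $P_k$ under a free cellular involution that, geometrically, coincides with the antipodal map on a sphere. Since the matrix $\mathbf{V}$ in Theorem~\ref{t:zonotope} has rank $k-1$, the zonotope $Z(\mathbf{V})$ is a $(k-1)$-dimensional polytope, so its boundary $\partial Z(\mathbf{V})$ is homeomorphic to $S^{k-2}$. Combined with Theorem~\ref{t:zonotope}, this identifies the geometric realization of the cubical complex $P_k$ with $S^{k-2}$.

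Next, I would consider the order-preserving involution $\tau : P_k \to P_k$ defined by $\tau(I,J) = (J,I)$ and $\tau(\hat 0) = \hat 0$. Since $I$ and $J$ are nonempty and disjoint, $I \ne J$, so $\tau$ fixes no non-minimum element; in fact, $\tau$ fixes no cube of the cubical complex, because a fixed cube would have to be a principal ideal generated by an element $(I,J)$ with $I = J$. Hence $\tau$ acts freely and cellularly on $|P_k|$, and the quotient is exactly $\mathcal{P}_k$, since $\mathcal{P}_k$ collapses each orbit $\{(I,J),(J,I)\}$ to the unordered pair $\{I,J\}$.

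The central step is to verify that, under the bijection $f$ from the proof of Theorem~\ref{t:zonotope}, the involution $\tau$ corresponds to negation of signed covectors. A case-by-case inspection of the formula defining $f$ shows $f \circ \tau = -f$: in each of the three cases ($k \notin R \cup T$, $k \in R$, $k \in T$), swapping $R$ and $T$ interchanges $V(\mathbf{v})^+$ and $V(\mathbf{v})^-$ across the entire sign vector, including at the distinguished coordinate~$k$. By Proposition~\ref{prop:bijective_from_ziegler}, negation of signed covectors corresponds under $f$ to the antipodal involution on faces of the centrally symmetric zonotope $Z(\mathbf{V})$; after translating $Z(\mathbf{V})$ so its centroid lies at the origin, this restricts on the boundary sphere to the standard antipodal map on $S^{k-2}$. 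The conclusion then follows from the standard identification $\RR\mathbb{P}^{k-2} = S^{k-2}/\{\pm 1\}$.

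The main obstacle is the verification that $\tau$ realizes the antipodal map. The somewhat asymmetric role of the index $k$ in the definition of $f$ (arising from the column of ones in $\mathbf{V}$) makes the case analysis the trickiest ingredient, although the three cases pair up in the expected way under $R \leftrightarrow T$. A secondary point requiring a brief justification is that the cubical complex built combinatorially from the principal ideals of $P_k$ (via Proposition~\ref{prop:ideal_is_cube}) is genuinely homeomorphic to $\partial Z(\mathbf{V})$ with its polytopal cell structure, which follows because the face lattice of a convex polytope determines its combinatorial type and hence the topology of its boundary.
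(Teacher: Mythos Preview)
Your argument is correct and follows essentially the same route as the paper: identify $P_k$ with $\partial Z(\mathbf V)\cong S^{k-2}$ via Theorem~\ref{t:zonotope}, and then pass to $\mathcal P_k$ by the 2-to-1 quotient $(I,J)\mapsto\{I,J\}$, which identifies opposite faces of the zonotope. The paper's proof merely asserts that this quotient map is the antipodal identification, whereas you supply the verification $f\circ\tau=-f$ and the freeness of $\tau$ that justify it; this is a welcome elaboration rather than a different strategy.
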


\begin{proof}
This follows from the fact that $\mathcal{P}_k$ is homeomorphic to $\mathbb S^{k-2}$ by Theorem~\ref{t:zonotope}, and the natural map $P_k \to \mathcal P_k$ sending $(I,J) \mapsto \{I,J\}$ induces a 2-to-1 covering map on cubical complexes that identifies opposite faces of the zonotope in Theorem~\ref{t:zonotope}.  
\end{proof}

%%%%%%%%%%%%%%%%%%%%%%%%%%%%%%%%%%%%%%%%%%%%%%%%%%%%%%%%%%%%%%%%%%%%%%%%%
%%%%%%%%%%%%%%%%%%%%%%%%%%%%%%%%%%%%%%%%%%%%%%%%%%%%
%%%%%%%%%%%%%%%%%%%%%%%%%%%%%%%%%%%%%%%%%%%%%%%%%%%%%%%%%%%%%%%%%%%%%%%%%

%%%%%%%%%%%%%%%%%%%%%%%%%%%%%%%%%%%%%%%%%%%%%%%%%%%%%%%%%%%%%%%%%%%%%%%%%
\end{document}